\renewenvironment{proof}[1][\proofname]{\par
  \vspace{-\topsep}
  \pushQED{\qed}%
  \normalfont
  \topsep-6pt \partopsep0pt 
  \trivlist
  \item[\hskip\labelsep
        \itshape
    #1\@addpunct{.}]\ignorespaces
}{%
  \popQED\endtrivlist\@endpefalse
  \addvspace{6pt plus 6pt} 
}
\newtheorem{thm}{Theorem}
\newtheorem{lem}[thm]{Lemma}
\newtheorem{prop}[thm]{Proposition}
\newtheorem{cor}[thm]{Corollary}
\newtheorem{que}[thm]{Question}
\newtheorem*{thm*}{Theorem}
\newtheorem*{claim*}{Claim}
\theoremstyle{definition}
\newcommand{\mc}{\mathcal}
\newcommand{\mb}{\mathbb}
\newcommand{\mr}{\mathrm}
\newcommand{\C}{\mathbb{C}}
\newcommand{\R}{\mathbb{R}}
\newcommand{\Z}{\mathbb{Z}}
\newcommand{\la}{\langle}
\newcommand{\ra}{\rangle}
\renewcommand{\epsilon}{\varepsilon}
\renewcommand{\phi}{\varphi}
\renewcommand{\tilde}{\widetilde}
\renewcommand{\hat}{\widehat}
\renewcommand{\Im}{\mr{Im}}
\newcommand{\ImF}{\Im\;\mb F}
\newcommand{\acts}{\curvearrowright}
\newcommand{\Cs}{\ensuremath{C^*}}
\newcommand{\FF}{\mr{F}_{4(-20)}}
\DeclareMathOperator{\SL}{SL}
\DeclareMathOperator{\SO}{SO}
\DeclareMathOperator{\SU}{SU}
\DeclareMathOperator{\Sp}{Sp}
\DeclareMathOperator{\PSL}{PSL}
\DeclareMathOperator{\Ind}{Ind}
\numberwithin{equation}{section}
\begin{document}
\selectlanguage{english} 

\begin{abstract}
We study the following question: given a locally compact group when does its Fourier algebra coincide with the subalgebra of the Fourier-Stieltjes algebra consisting of functions vanishing at infinity? We provide sufficient conditions for this to be the case.

As an application, we show that when $P$ is the minimal parabolic subgroup in one of the classical simple Lie groups of real rank one or the exceptional such group, then the Fourier algebra of $P$ coincides with the subalgebra of the Fourier-Stieltjes algebra of $P$ consisting of functions vanishing at infinity. In particular, the regular representation of $P$ decomposes as a direct sum of irreducible representations although $P$ is not compact.
\end{abstract}


\title{Fourier Algebras of Parabolic Subgroups
}
\author{S{\o}ren Knudby}
\thanks{Supported by ERC Advanced Grant no.~OAFPG 247321 and the Danish National Research Foundation through the Centre for Symmetry and Deformation (DNRF92).}
\address{Department of Mathematical Sciences, University of Copenhagen,
\newline Universitetsparken 5, DK-2100 Copenhagen \O, Denmark}
\email{knudby@math.ku.dk}
\date{\today}
\maketitle
\section{Introduction}
In the paper \cite{MR0228628}, Eymard introduced the Fourier algebra $A(G)$ and the Fourier-Stieltjes algebra $B(G)$ of a locally compact group $G$. The Fourier-Stieltjes algebra $B(G)$ is defined as the linear span of the continuous positive definite functions on $G$. There is a natural identification of $B(G)$ with the Banach space dual of the full group \Cs-algebra $C^*(G)$, and under this identification $B(G)$ inherits a norm with which it is a Banach space. The Fourier algebra $A(G)$ is the closed subspace in $B(G)$ generated by the compactly supported functions in $B(G)$. Other descriptions of $A(G)$ and $B(G)$ are available (see Section~\ref{sec:fourier}). The Fourier and Fourier-Stieltjes algebras play an important role in non-commutative harmonic analysis.

For any locally compact group it is the case that elements of the Fourier algebra vanish at infinity: $A(G) \subseteq C_0(G)$. It is natural to ask whether the converse is true, that is, if every function in $B(G)$ vanishing at infinity belongs to $A(G)$.

\begin{que}\label{q1}
Let $G$ be a locally compact group. When does the equality
\begin{align}\label{eq:q1}
A(G) = B(G) \cap C_0(G)
\end{align}
hold?
\end{que}
Of course, if the group $G$ is compact then $B(G) = A(G)$, and \eqref{eq:q1} obviously holds. But for non-compact groups the question is more delicate.

In 1916, Menchoff \cite{menchoff} proved the existence of a singular probability measure $\mu$ on the circle such that its Fourier-Stieltjes transform $\hat\mu$ satisfies $\hat\mu(n) \to 0$ as $|n|\to\infty$. In other words, $\hat\mu \in B(\Z) \cap C_0(\Z)$, but $\hat\mu\notin A(\Z)$, and thus the answer to Question~\ref{q1} is negative when $G$ is the group of integers $\Z$. In 1966, Hewitt and Zuckerman \cite{MR0193435} proved that for any abelian locally compact group $G$ the answer to Question~\ref{q1} is always negative, unless $G$ is compact. In 1983, Taylor showed that for any countable, discrete group $G$ one has $A(G) \neq B(G) \cap C_0(G)$, unless $G$ is finite (see \cite[p.~190]{MR690194} and \cite{MR552704}). In fact, Taylor proved that non-compact, second countable IN-groups $G$ never satisfy \eqref{eq:q1}.

It is proved in \cite{MR0493175},\cite{MR552704} that if \eqref{eq:q1} holds for some second countable, locally compact group $G$, then the regular representation of $G$ is completely reducible, i.e., a direct sum of irreducible representations. For a while, this was thought to be a characterization of groups satisfying \eqref{eq:q1}, but this was shown not to be the case (see \cite{MR509261}).

The first non-compact example of a group $G$ satisfying \eqref{eq:q1} was given by Khalil in \cite{MR0350330}, namely the (non-unimodular) $ax+b$ group consisting of affine transformations $x\mapsto ax + b$ of the real line, where $a> 0$ and $b\in\R$. We remark that the $ax+b$ group is isomorphic to the minimal parabolic subgroup in the simple Lie group $\PSL_2(\R)$ of real rank one.

It follows from Baggett's work \cite{MR735532} that if $G$ is a locally compact, second countable group which is also connected, unimodular and has a completely reducible regular representation, then $G$ is compact (see \cite[Theorem~3]{MR2459312}). In particular, Question~\ref{q1} has a negative answer for locally compact, second countable, connected, unimodular groups which are non-compact. This gives an abundance of examples of groups where Question~\ref{q1} has a negative answer. An example given in \cite{MR0486289} and \cite{MR0420149} (independently) of a unimodular group satisfying \eqref{eq:q1} shows that the assumption about connectedness cannot be removed from the previous statement, and of course the assumption about unimodularity cannot be removed as the $ax +b$ group shows.

It should be apparent from the above that there are plenty of examples of groups for which Question~\ref{q1} has a negative answer and so far only very few examples with an affirmative answer. In this paper we provide new examples of groups answering Question~\ref{q1} in the affirmative. Our main source of examples is formed by the minimal parabolic subgroups in connected simple Lie groups of real rank one. But first we give a more straightforward example which is a subgroup of $\SL_3(\R)$ resembling the $ax+b$ group. We prove the following.

\begin{thm}\label{thm:fourier-PP}
We have $A(P) = B(P) \cap C_0(P)$ for the group
\begin{align}\label{eq:PP}
P = 
\left\{\begin{pmatrix}
\lambda & a & c \\
0 & \lambda^{-1} & b \\
0 & 0 & 1
\end{pmatrix}
\middle\vert a,b,c\in\R,\ \lambda>0
\right\}.
\end{align}
\end{thm}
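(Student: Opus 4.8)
The plan is to combine the explicit representation theory of $P$ with a relative Howe--Moore property for its centre. Write $Z = \{(0,0,c)\}$ for the centre of the Heisenberg nilradical of $P$, so that $Z \cong \R$ is a non-compact closed normal subgroup of $P$ with $P/Z$ non-compact. Since $P$ is a connected, simply connected exponential solvable Lie group it is of type I, and I would first run the Mackey machine using the abelian normal subgroup $V = \{(0,b,c)\} \cong \R^2$. A direct computation shows that $P/V$ acts on $\hat V \cong \R^2$ (with coordinates $(\beta,\gamma)$ dual to $(b,c)$) by $\lambda\cdot(\beta,\gamma) = (\lambda\beta,\lambda^{-1}\gamma)$ and $a\cdot(\beta,\gamma) = (\beta + a\gamma,\gamma)$. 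Hence there are exactly two open orbits $\{\gamma>0\}$ and $\{\gamma<0\}$, giving two infinite-dimensional irreducible representations $\pi_+,\pi_-$ that are non-trivial on $Z$, while every remaining irreducible representation (those lying over $\gamma = 0$) is trivial on $Z$. The Plancherel measure of $P$ is concentrated on the generic orbits, so $\lambda_P$ is quasi-equivalent to a multiple of $\pi_+ \oplus \pi_-$ and the degenerate representations are Plancherel-null.

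The heart of the argument is the following two facts. First (relative Howe--Moore), every unitary representation of $P$ without non-zero $Z$-invariant vectors has all of its matrix coefficients in $C_0(P)$; equivalently, the coefficients of $\pi_+$ and $\pi_-$ vanish at infinity on all of $P$. Second, any representation of $P$ without non-zero $Z$-invariant vectors is quasi-contained in $\lambda_P$: by the Mackey analysis its irreducible constituents lie among $\pi_+,\pi_-$, which occur in the Plancherel decomposition, so all of its coefficients belong to $A(P)$.

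Granting these, I would finish as follows. The inclusion $A(P)\subseteq B(P)\cap C_0(P)$ is automatic. Conversely, let $u \in B(P)\cap C_0(P)$ and realise $u = \la \pi(\cdot)\xi,\eta\ra$ for a unitary representation $\pi$ of $P$. Since $Z$ is normal, the subspace of $Z$-invariant vectors is $P$-invariant, so $\pi = \pi_0 \oplus \pi_1$ with $\pi_0$ trivial on $Z$ and $\pi_1$ having no non-zero $Z$-invariant vectors; decomposing $\xi,\eta$ accordingly yields $u = u_0 + u_1$, where $u_i$ is a coefficient of $\pi_i$. By the first fact $u_1 \in C_0(P)$, whence $u_0 = u - u_1 \in C_0(P)$. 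But $\pi_0(z)$ fixes the relevant vectors, so $u_0$ is right $Z$-invariant and therefore constant on each coset $gZ$; as $Z$ is non-compact, a non-zero such function cannot vanish at infinity, forcing $u_0 = 0$. Thus $u = u_1$ is a coefficient of $\pi_1$, and by the second fact $u \in A(P)$.

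The main obstacle is the relative Howe--Moore property for the pair $(P,Z)$: one must show that a representation which is non-trivial on the centre has matrix coefficients decaying in every direction of $P$, not merely along $Z$. I expect to establish this by writing $\pi_\pm$ explicitly as the representation induced from a character of $V$ with $\gamma\neq 0$ and checking that the dilation action of $\R_{>0}$ together with the shear action supplies decay in the $\lambda$- and $a$-directions, while decay in the $Z$-direction is the Riemann--Lebesgue behaviour of the underlying character. The remaining steps---the Mackey parametrisation and the identification of the Plancherel support---are routine for exponential solvable groups.
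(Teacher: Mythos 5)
Your proposal is correct and follows essentially the same route as the paper: you run the Mackey machine on the same abelian normal subgroup ($V=N_0\cong\R^2$), obtain the same orbit picture (two open orbits whose induced representations $\pi_\pm$ exhaust $\lambda_P$, all remaining irreducibles trivial on the centre $Z=N_1$ of the nilradical), and your closing splitting-plus-$C_0$ argument is exactly the mechanism of Lemma~\ref{lem:AB}. The one substantive remark is that your declared ``main obstacle''---establishing the relative Howe--Moore property for $(P,Z)$ by direct decay estimates on the induced representations---is unnecessary: once $\pi_\pm$ are known to be subrepresentations of $\lambda_P$ (your second fact), their coefficients lie in $A(P)\subseteq C_0(P)$ automatically, which is why the paper derives the relative Howe--Moore property as a corollary rather than using it as an ingredient.
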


If we think of $\SL_2(\R)\ltimes\R^2$ as a subgroup of $\SL_3(\R)$ in the following way
\begin{align}\label{eq:SL3}
\left(\begin{array}{cc|c}
\multicolumn{2}{c|}{\multirow{2}{*}{$\SL_2(\R)$}} & \multirow{2}{*}{$\R^2$} \\
 & & \\
 \hline
\multicolumn{2}{c|}{0} & 1
\end{array}\right),
\end{align}
then we can think of $P$ as a subgroup of $\SL_2(\R)\ltimes\R^2$. This viewpoint is relevant in \cite{HK-WH-examples} and was actually the initial motivation for the present paper.

Apart from the group in \eqref{eq:PP}, our examples of groups satisfying \eqref{eq:q1} arise in the following way. Let $n\geq 2$, let $G$ be one of the classical simple Lie groups $\SO_0(n,1)$, $\SU(n,1)$, $\Sp(n,1)$ or the exceptional group $\FF$, and let $G = KAN$ be the Iwasawa decomposition. If $M$ is the centralizer of $A$ in $K$, then $P = MAN$ is the \emph{minimal parabolic subgroup} of $G$. We refer to Section~\ref{sec:simple-groups} for more details on these groups. We prove the following theorem concerning the Fourier algebras of minimal parabolic subgroups.

\begin{thm}\label{thm:fourier-P}
Let $P$ be the minimal parabolic subgroup in one of the simple Lie groups $\SO_0(n,1)$, $\SU(n,1)$, $\Sp(n,1)$ or $\FF$. Then $A(P) = B(P) \cap C_0(P)$.
\end{thm}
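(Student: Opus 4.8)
The plan is to reduce the equality to a single vanishing statement about the part of $B(P)$ that is singular with respect to the regular representation. Write $\lambda_P$ for the left regular representation and let $z$ be the central projection in $C^*(P)^{**}$ with $z\,C^*(P)^{**} = \mr{VN}(P)$. This gives the canonical decomposition $B(P) = A(P) \oplus B_s(P)$ as an $\ell^1$-direct sum of Banach spaces, where $A(P) = z\cdot B(P)$ consists of the coefficients quasi-contained in $\lambda_P$ and $B_s(P) = (1-z)\cdot B(P)$ consists of the coefficients of representations disjoint from $\lambda_P$. As noted in the introduction, $A(P)\subseteq C_0(P)$ always. Hence for $u \in B(P)\cap C_0(P)$ both $u$ and its regular part $zu \in A(P)$ lie in $C_0(P)$, so the singular part $(1-z)u$ does as well, and the theorem reduces to showing
\[
B_s(P)\cap C_0(P) = \{0\}.
\]

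To analyze $B_s(P)$ I would use the semidirect product structure $P = (M\times A)\ltimes N$, with $N$ the nilradical and $L := Z(N)$ its center. In all four cases $L$ is non-compact --- isomorphic to $\R^{n-1}$ for $\SO_0(n,1)$ and to $\ImF$ (that is, $\R$, $\R^3$, $\R^7$) for $\SU(n,1)$, $\Sp(n,1)$, $\FF$ --- and $A$ acts on $L$ by genuine dilations. The structural claim I would establish is that \emph{every representation of $P$ disjoint from $\lambda_P$ is trivial on $L$}. Granting it, any $u_s \in B_s(P)$ is a coefficient of a representation $\rho$ disjoint from $\lambda_P$, hence trivial on $L$, so $u_s(g\ell) = u_s(g)$ for all $g\in P$ and $\ell\in L$. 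A function that is constant along the non-compact closed subgroup $L$ and also vanishes at infinity must vanish identically: fixing $g$ and letting $\ell\to\infty$ in $L$ forces $u_s(g)=0$. Thus $B_s(P)\cap C_0(P)=\{0\}$, and $u = zu \in A(P)$.

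The structural claim is the heart of the matter, and I would prove it with the Mackey machine for $P=(M\times A)\ltimes N$ applied over the dual of $N$ (over the Plancherel decomposition of $\hat N$ when $N$ is a Heisenberg-type group). Representations trivial on $L$ are exactly those with trivial central character; they correspond to a Plancherel-null stratum of $\hat N$ (the characters of $N$ when $N$ is non-abelian, the single point $0$ when $N=L$ is abelian), and are therefore disjoint from $\lambda_P$. Representations nontrivial on $L$ correspond to nonzero central characters, and here the rank-one data enter: $A$ scales $\hat L$ while $M$ acts on $L\cong\ImF$ through an isotropy representation that is transitive on the unit sphere. Consequently the nonzero central characters form only one or two $MA$-orbits, each with compact stabilizer, and the associated induced representations are square-integrable --- just as the two representations $\pi_{\pm}$ realize $L^2$ of the $ax+b$ group. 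These are genuine subrepresentations of $\lambda_P$ and together exhaust it, which proves the claim and simultaneously yields the decomposition of $\lambda_P$ as a direct sum of irreducibles.

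The main obstacle is carrying out this orbit analysis uniformly across the four families, and in particular verifying for the Heisenberg-type nilradicals that the generic induced representations sit discretely inside $\lambda_P$ with complement exactly the representations trivial on $L$. The clean device for the decay this requires is the relative Howe--Moore property of the pair $(P,L)$: every representation of $P$ with no nonzero $L$-fixed vectors has matrix coefficients in $C_0(P)$. Establishing this property --- which amounts to showing that dilation by $A$ on $L$ propagates decay into every direction of $P$ --- is the technical crux; once in hand it certifies that the representations able to contribute to $B_s(P)$ are precisely those trivial on the non-compact subgroup $L$, which is exactly what the vanishing argument consumes.
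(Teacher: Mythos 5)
Your proposal is correct in substance and, at its core, follows the same route as the paper: classify $\hat P$ with the Mackey machine over $\hat N$ (as in Wolf's work), observe that the irreducibles nontrivial on $L$ ($=N$ in the real case, $=Z(N)$ otherwise) live over the $MA$-orbits of positive Plancherel measure and hence embed in $\lambda_P$ by the Kleppner--Lipsman criterion \cite[Corollary~11.1]{MR0342641}, and then kill the remaining part of a $C_0$ coefficient because it is constant on cosets of the non-compact subgroup $L$. Where you differ is the packaging of the reduction: you invoke the canonical $\ell^1$-decomposition $B(P)=A(P)\oplus B_s(P)$ coming from the central projection $z$ supporting $\mr{VN}(P)$ in $C^*(P)^{**}$ and reduce to $B_s(P)\cap C_0(P)=\{0\}$, whereas the paper splits an arbitrary representation as $\sigma_1\oplus\sigma_2$ with $\sigma_1$ trivial on $H$ and $\sigma_2\leq\lambda_P^\infty$ (Lemma~\ref{lem:split}) and argues on coefficients (Lemma~\ref{lem:AB}). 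Your version is cleaner at the Banach-space level, but be aware that your structural claim (``every representation disjoint from $\lambda_P$ is trivial on $L$'') concerns arbitrary representations, not just irreducible ones, so you still need the type~I property of $P$ and a direct-integral argument to pass from the irreducible dichotomy to it --- this is exactly the content hidden in the paper's Lemma~\ref{lem:split}, and your sketch elides it; likewise the assertion that $\Ind_{NL_v}^P(\chi_v\otimes\gamma)$ is trivial on $Z(N)$ needs a small argument (the paper's Lemma~\ref{lem:trivial-induction} plus an invariant measure on $P/NL_v$). Finally, your closing paragraph misidentifies the crux: the relative Howe--Moore property of $(P,L)$ is neither needed nor sufficient here (a representation can have $L$-invariant vectors without being trivial on $L$), and in the paper it appears as a corollary of the structure result rather than an ingredient; the actual technical work is the Plancherel-measure orbit analysis you describe in your third paragraph, and the required decay is just the elementary fact that subrepresentations of $\lambda_P$ have $C_0$ coefficients.
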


In order to establish Theorem~\ref{thm:fourier-PP} and Theorem~\ref{thm:fourier-P} we develop a general strategy for providing examples of groups that answer Question~\ref{q1} affirmatively. The strategy is based on (1) determining all irreducible representations of the group, (2) determining the irreducible subrepresentations of the regular representation and (3) disintegration theory. An often useful tool for (1) is the Mackey Machine (see \cite[Chapter 6]{MR1397028} and \cite{MR3012851}). Our strategy is contained in the following theorem.

\begin{thm}\label{thm:strategy}
Let $G$ be a second countable, locally compact group satisfying the following two conditions.
\begin{enumerate}
	\item $G$ is of type I.
	\item There is a non-compact, closed subgroup $H$ of $G$ such that every irreducible unitary representation of $G$ is either trivial on $H$ or is a subrepresentation of the left regular representation $\lambda_G$.
\end{enumerate}
Then
$$
A(G) = B(G) \cap C_0(G).
$$
In particular, the left regular representation $\lambda_G$ is completely reducible.
\end{thm}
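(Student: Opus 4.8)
The plan is to establish the nontrivial inclusion $B(G)\cap C_0(G)\subseteq A(G)$, the reverse inclusion $A(G)\subseteq B(G)\cap C_0(G)$ being valid for every locally compact group. The argument combines one elementary observation about $H$ with type I disintegration theory. The observation is a \emph{vanishing lemma}: if $H$ is a non-compact closed subgroup and $v\in C_0(G)$ satisfies $v(gh)=v(g)$ for all $g\in G$ and $h\in H$, then $v=0$. Indeed, $v$ is constant along each right coset $gH$, which is homeomorphic to the non-compact group $H$; if $v(g)\neq 0$, then $\{\,|v|\geq |v(g)|/2\,\}$ would contain the non-compact set $gH$, contradicting $v\in C_0(G)$. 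The same holds for left-$H$-invariant functions, so any coefficient of a representation trivial on $H$ (such a coefficient being both left- and right-$H$-invariant) must vanish once it lies in $C_0(G)$.

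Next I would split the dual. Let $R\subseteq\hat G$ be the set of classes of irreducible representations occurring as subrepresentations of $\lambda_G$, and set $T=\hat G\setminus R$; by hypothesis (2) every element of $T$ is trivial on $H$. Since $G$ is second countable, $L^2(G)$ is separable, and the pairwise orthogonal isotypic subspaces attached to distinct members of $R$ force $R$ to be countable; in particular $R$ and $T$ are Borel subsets of the standard Borel space $\hat G$. Now take $u\in B(G)\cap C_0(G)$ and write $u(g)=\langle\pi(g)\xi,\eta\rangle$ for a representation $\pi$ acting on a separable Hilbert space (restricting to the subspace generated by $\xi$ and $\eta$). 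As $G$ is type I, the central decomposition gives $\pi\cong\int_{\hat G}^\oplus\pi_x\,d\mu(x)$ into multiples $\pi_x$ of the irreducibles $x$. Splitting the integral over the partition $\hat G=R\sqcup T$ produces $\pi$-reducing subspaces $\mathcal{H}=\mathcal{H}_R\oplus\mathcal{H}_T$; decomposing $\xi,\eta$ accordingly and discarding the cross terms (which vanish since $\mathcal{H}_R,\mathcal{H}_T$ are orthogonal and invariant) yields $u=u_R+u_T$ in $B(G)$, where $u_R,u_T$ are coefficients of $\pi_R=\int_R^\oplus\pi_x\,d\mu$ and $\pi_T=\int_T^\oplus\pi_x\,d\mu$.

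The two summands are treated separately. Being a direct integral over $T$ of representations trivial on $H$, the representation $\pi_T$ is trivial on $H$, so $u_T$ is left- and right-$H$-invariant. On the other side, because $R$ is countable the measure $\mu|_R$ is purely atomic, so $\pi_R$ is a genuine countable direct sum of multiples of members of $R$; as each such member embeds into $\lambda_G$, the representation $\pi_R$ embeds into a multiple $\lambda_G\otimes 1$, and one checks that a coefficient of $\lambda_G\otimes 1$ is a norm-convergent sum of coefficients of $\lambda_G$, hence lies in $A(G)$. Thus $u_R\in A(G)\subseteq C_0(G)$, so $u_T=u-u_R\in C_0(G)$; being also right-$H$-invariant, the vanishing lemma gives $u_T=0$, whence $u=u_R\in A(G)$. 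This proves the equality. For the final assertion one may quote \cite{MR0493175,MR552704}; alternatively it follows from the same machinery applied to $\lambda_G$, whose $T$-part is a subrepresentation of $\lambda_G$ trivial on $H$ and therefore has only $H$-invariant coefficients in $A(G)\subseteq C_0(G)$, which must vanish, leaving $\lambda_G$ a direct sum of irreducibles.

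I expect the delicate step to be the type I central-decomposition argument, and specifically the claim that the $R$-part lands in $A(G)$ itself and not merely in the reduced Fourier--Stieltjes algebra. This rests on the countability of $R$ (so that $\mu|_R$ is atomic and $\pi_R$ is an honest direct sum rather than a continuous integral), together with the standard identification of coefficients of representations quasi-contained in $\lambda_G$ as elements of $A(G)$. By comparison, the $H$-invariance and vanishing arguments are routine.
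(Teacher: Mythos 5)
Your proof is correct and follows essentially the same route as the paper: countability of the irreducible subrepresentations of $\lambda_G$ via separability, a type~I direct-integral splitting of an arbitrary representation into an $H$-trivial part and a part quasi-contained in $\lambda_G$, the observation that coefficients of $\lambda_G^\infty$ lie in $A(G)$, and the vanishing of $C_0$ functions constant on cosets of the non-compact $H$. The only differences are organizational (the paper partitions $\hat G$ by triviality on $H$ rather than by containment in $\lambda_G$, and establishes complete reducibility of $\lambda_G$ first rather than last), and neither affects the substance.
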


In order to verify the two conditions in Theorem~\ref{thm:strategy} for the minimal parabolic subgroups $P$, we rely primarily on earlier work of J.A.~Wolf. In \cite{MR0422519} the irreducible representations of some parabolic subgroups are determined by employing the Mackey Machine, and the approach of \cite{MR0422519} carries over to our situation almost without changes. Combining \cite{MR0422519} with \cite{MR0342641} we can easily determine the irreducible subrepresentations of the regular representation of $P$.

Although the algebra $B(G)\cap C_0(G)$ often does not coincide with the Fourier algebra, it has gained much interest recently (see \cite{MR3473390} and \cite{MR3071703}). It is sometimes referred to as the Rajchman algebra.

The organization of the paper is as follows. Section~\ref{sec:fourier} contains the basic properties of the Fourier and Fourier-Stieltjes algebra. In Section~\ref{sec:strategy} we prove Theorem~\ref{thm:strategy}. Section~\ref{sec:inv-meas} contains a few results to be used later when we verify condition~(2) of Theorem~\ref{thm:strategy} for the groups under consideration in the succeeding sections. In Section~\ref{sec:PP} we prove Theorem~\ref{thm:fourier-PP}, and in Section~\ref{sec:simple-groups} we prove Theorem~\ref{thm:fourier-P}. Finally, Section~\ref{sec:concluding} contains some concluding remarks.

\section{The Fourier and Fourier-Stieltjes algebra}\label{sec:fourier}

This section contains a brief description of the Fourier and Fourier-Stieltjes algebra of a locally compact group introduced by Eymard in \cite{MR0228628}. We refer to the original paper \cite{MR0228628} for more details. Let $G$ be a locally compact group equipped with a left Haar measure. By a representation of $G$ we always mean a strongly continuous unitary representation of $G$ on some Hilbert space. If $\pi$ is a representation of $G$ on a Hilbert space $\mc H$ and $x,y\in\mc H$, then the continuous complex function
$$
\phi(g) = \la \pi(g)x,y\ra, \qquad g\in G,
$$
is a \emph{matrix coefficient} of $\pi$. The Fourier-Stieltjes algebra of $G$ is denoted $B(G)$ and consists of the complex linear span of continuous positive definite functions on $G$. It coincides with the set of all matrix coefficients of representations of $G$,
$$
B(G) = \{ \la \pi(\cdot)x,y \ra \mid (\pi,\mc H) \text{ is a representation of } G \text{ and } x,y\in\mc H \}.
$$
Since the pointwise product of two positive definite functions is again positive definite, $B(G)$ is an algebra under pointwise multiplication. Given $\phi\in B(G)$, the map
$$
f\mapsto \la f,\phi\ra = \int_G f(x)\phi(x) \,dx, \qquad f\in L^1(G),
$$
is a linear functional on $L^1(G)$ which is bounded, when $L^1(G)$ is equipped with the universal \Cs-norm. Hence $\phi$ defines a functional on $C^*(G)$, the full group \Cs-algebra of $G$, and this gives the identification of $B(G)$ with $C^*(G)^*$ as vector spaces. The Fourier-Stieltjes algebra inherits the norm
$$
\|\phi\| = \sup\{ |\la f,\phi\ra| \mid f\in L^1(G),\ \|f\|_{C^*(G)} \leq 1 \}
$$
of $C^*(G)^*$ from this identification. With this norm, $B(G)$ is a unital Banach algebra.

Given $\phi\in B(G)$, a representation $(\pi,\mc H)$, and $x,y\in\mc H$ such that $\phi(g) = \la \pi(g)x,y\ra$, we have
$$
\|\phi\| \leq \|x\|\|y\|,
$$
and conversely, it is always possible to find $(\pi,\mc H)$ and vectors $x,y\in \mc H$ such that $\phi(g) = \la \pi(g)x,y\ra$ and $\|\phi\| = \|x\|\|y\|$.

The Fourier algebra of $G$ is denoted $A(G)$ and is the closure of the set of compactly supported functions in $B(G)$, and $A(G)$ is in fact an ideal. The Fourier algebra coincides with the set of all matrix coefficients of the left regular representation of $G$,
$$
A(G) = \{ \la \lambda(\cdot)x,y \ra \mid x,y\in L^2(G) \},
$$
and given any $\phi\in A(G)$, there are $x,y\in L^2(G)$ such that $\phi(g) = \la \lambda(g)x,y\ra$ and $\|\phi\| = \|x\|\|y\|$. This can be rephrased as follows. Given $\phi\in A(G)$, there are $f,h\in L^2(G)$ such that $\phi = f * \check h$ and $\|\phi\| = \|f\|\|h\|$, where $\check h(g) = h(g^{-1})$. This is often written as
$$
A(G) = L^2(G) * L^2(G).
$$
It is known that $\|\phi\|_\infty \leq \|\phi\|$ for any $\phi\in B(G)$, and hence $A(G) \subseteq C_0(G)$.

Although we will not study group von Neumann algebras in this paper, we note that $A(G)$ may be identified with the predual of the group von Neumann algebra $L(G)$ of $G$ via the duality
$$
\la T,\phi\ra = \la Tf,h\ra,
$$
where $T\in L(G)$ and $\phi = \bar h*\check f$ for some $f,h\in L^2(G)$.

\section{Proof of Theorem~\ref{thm:strategy}}\label{sec:strategy}

In this section we prove Theorem~\ref{thm:strategy}, which is the basis for proving Theorems~\ref{thm:fourier-PP} and \ref{thm:fourier-P}. We first prove that the conditions in Theorem~\ref{thm:strategy} ensure that the regular representation is completely reducible.

\begin{lem}
Let $G$ be a locally compact group. Any unitary representation of $G$ on a separable Hilbert space has at most countably many inequivalent (with respect to unitary equivalence) irreducible subrepresentations.
\end{lem}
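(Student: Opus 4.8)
The plan is to reduce the statement to a standard fact about separable Hilbert spaces by first establishing that inequivalent irreducible subrepresentations live on mutually orthogonal subspaces. I would fix a unitary representation $(\pi,\mc H)$ with $\mc H$ separable, and suppose $\{\mc H_i\}_{i\in I}$ is a family of closed $\pi$-invariant subspaces on which $\pi$ restricts to pairwise inequivalent irreducible representations $\pi_i=\pi|_{\mc H_i}$. The goal is to show that $I$ is countable.

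The key step is to prove that $\mc H_i\perp\mc H_j$ whenever $i\neq j$. To do this I would consider, for $i\neq j$, the orthogonal projection $P_j$ of $\mc H$ onto $\mc H_j$. Since $\mc H_j$ is $\pi$-invariant, and hence so is its orthogonal complement, the projection $P_j$ commutes with every $\pi(g)$. Consequently the restriction $P_j|_{\mc H_i}\colon\mc H_i\to\mc H_j$ is a bounded operator intertwining $\pi_i$ and $\pi_j$. As $\pi_i$ and $\pi_j$ are irreducible and inequivalent, Schur's lemma forces $P_j|_{\mc H_i}=0$, which says precisely that every vector of $\mc H_i$ is orthogonal to $\mc H_j$, i.e. $\mc H_i\perp\mc H_j$.

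With orthogonality in hand I would conclude by the usual separability argument. Each $\mc H_i$ is nonzero, so choosing a unit vector $e_i\in\mc H_i$ produces an orthonormal family $\{e_i\}_{i\in I}$ in $\mc H$. A separable Hilbert space admits no uncountable orthonormal set, so $I$ is countable. Since a family of pairwise inequivalent irreducible subrepresentations corresponds to a set of distinct unitary equivalence classes, this shows that $\pi$ has at most countably many inequivalent irreducible subrepresentations.

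The only genuine content is the orthogonality step, and the main (minor) subtlety there is the correct invocation of Schur's lemma for possibly infinite-dimensional unitary representations: I must use the version stating that a nonzero bounded intertwiner between two irreducible unitary representations is invertible (a scalar multiple of a unitary), and therefore implements a unitary equivalence. Everything else is routine.
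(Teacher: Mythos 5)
Your proof is correct. It rests on the same two pillars as the paper's argument---inequivalent irreducible subrepresentations occupy orthogonal parts of the Hilbert space, and a separable Hilbert space cannot contain an uncountable orthogonal family---but you implement the orthogonality step differently. The paper works in the commutant $\pi(G)'$: irreducible subrepresentations correspond to minimal projections, inequivalent minimal projections have orthogonal central supports (a fact cited from Kadison--Ringrose), and the central supports then form an at most countable orthogonal family. You instead prove directly that the carrier subspaces $\mc H_i$ are mutually orthogonal, by observing that the orthogonal projection onto $\mc H_j$ restricts to a bounded intertwiner $\mc H_i\to\mc H_j$ which must vanish by Schur's lemma for inequivalent irreducibles. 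Your route is more elementary and self-contained (the only input is the unitary Schur lemma, correctly invoked in its polar-decomposition form for infinite dimensions), while the paper's route is more structural: phrased in terms of minimal projections and central supports, it is really a statement about an arbitrary von Neumann algebra on a separable Hilbert space, independent of any representation-theoretic interpretation. The two arguments establish the same orthogonality in the end---orthogonal central supports force the ranges of the minimal projections to be orthogonal---so the difference is one of technology rather than substance. One small bookkeeping point you handle implicitly but should keep in mind: the statement counts equivalence \emph{classes} of irreducible subrepresentations, so one fixes a choice of one invariant subspace per class before running the orthonormal-vector argument; as you note, this is routine.
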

\begin{proof}
Let $\pi$ be a unitary representation of $G$. The subrepresentations of $\pi$ are in correspondence with the projections in the commutant $\pi(G)'$, equivalent subrepresentations correspond to projections that are equivalent in $\pi(G)'$ (in the sense of Murray-von Neumann), and the irreducible subrepresentations correspond to minimal projections in $\pi(G)'$. It is therefore enough to show that a von Neumann algebra on a separable Hilbert space has at most countably many inequivalent minimal projections. Let $M$ be such a von Neumann algebra.

Recall that two minimal projections are inequivalent if and only if their central supports are orthogonal (see \cite[Proposition 6.1.8]{MR1468230}). Let $(p_i)_{i\in I}$ be a family of inequivalent minimal projections, and let $c_i$ be the central support of $p_i$. Then $(c_i)_{i\in I}$ is a family of orthogonal projections. By separability of the Hilbert space, $I$ must be countable. Hence there are at most countably many inequivalent minimal projections in $M$.
\end{proof}

The left regular representation represents $G$ on the Hilbert space $L^2(G)$, and if the group $G$ is second countable, the space $L^2(G)$ is separable. Hence we obtain the following corollary.

\begin{cor}
Let $G$ be a locally compact, second countable group. Then the left regular representation of $G$ has at most countably many inequivalent irreducible subrepresentations.
\end{cor}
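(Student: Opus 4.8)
The plan is to derive the Corollary directly from the preceding Lemma, once the left regular representation is recognized as a representation on a separable Hilbert space. The left regular representation $\lambda_G$ acts on $L^2(G)$ with respect to a left Haar measure, so the only thing to check is that $L^2(G)$ is separable; granting that, I would simply apply the Lemma with $\pi = \lambda_G$ to conclude that $\lambda_G$ has at most countably many inequivalent irreducible subrepresentations.

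To see that $L^2(G)$ is separable I would use second countability of $G$. A second countable locally compact Hausdorff group is metrizable and $\sigma$-compact, and its Haar measure is a $\sigma$-finite Radon measure. One then fixes a countable base of relatively compact open sets; the Borel $\sigma$-algebra is generated (modulo null sets) by this countable family, and standard arguments show that simple functions supported on finite Boolean combinations of these sets, taken with coefficients in $\mathbb{Q} + i\mathbb{Q}$, form a countable dense subset of $L^2(G)$. Equivalently, one can note that $C_c(G)$ is dense in $L^2(G)$ and is itself separable in the relevant topology because $G$ is second countable.

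I expect this measure-theoretic verification of separability to be the only step requiring any work, and even it is entirely routine; the representation-theoretic content has already been isolated in the Lemma. With separability of $L^2(G)$ established, the Corollary follows immediately.
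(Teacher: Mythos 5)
Your proposal is correct and follows exactly the paper's argument: apply the preceding Lemma to $\lambda_G$ acting on $L^2(G)$, which is separable because $G$ is second countable. The paper simply takes the separability of $L^2(G)$ as standard, whereas you spell out the routine verification.
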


We recall that a unitary representation is of type I, if the image of the representation generates a type I von Neumann algebra. A locally compact group is said to be of type I, if all its unitary representations are of type I (see \cite[Chapter~13]{MR0458185}). Disintegration theory works especially well in the setting of type I groups. We refer to \cite[Chapter~7]{MR1397028} for more on type I groups and disintegration theory. Several equivalent characterizations of type I groups can also be found in \cite[Chapter~9]{MR0458185}, but let us just mention one characterization here. The unitary equivalence classes of irreducible representations form a set $\hat G$ called the unitary dual of $G$. The unitary dual $\hat G$ is equipped with the Mackey Borel structure, and $G$ is of type I if and only if $\hat G$ is a standard Borel space.

\begin{prop}\label{prop:completely-reducible}
Let $G$ be a second countable, locally compact group satisfying the following two conditions.
\begin{enumerate}
	\item $G$ is of type I.
	\item There is a non-compact, closed subgroup $H$ of $G$ such that every irreducible unitary representation of $G$ is either trivial on $H$ or is a subrepresentation of the left regular representation $\lambda_G$.
\end{enumerate}
Then the left regular representation $\lambda_G$ is completely reducible.
\end{prop}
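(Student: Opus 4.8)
The plan is to realise $\lambda_G$ through its Plancherel decomposition and to show that the Plancherel measure is purely atomic. Since $G$ is second countable and type I, disintegration theory (see \cite[Chapter~7]{Folland}) furnishes a standard $\sigma$-finite Plancherel measure $\mu$ on $\hat G$ and a decomposition
$$
\lambda_G \cong \int_{\hat G}^\oplus \pi\otimes I_{n(\pi)}\ d\mu(\pi),
$$
where $n(\pi)$ records the multiplicity. The classical fact I would invoke here is that an irreducible $\pi$ is an honest subrepresentation of $\lambda_G$ if and only if $\mu(\{\pi\})>0$; that is, the irreducible subrepresentations of $\lambda_G$ are exactly the atoms of $\mu$. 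Consequently $\lambda_G$ is completely reducible precisely when $\mu$ is concentrated on its atoms, and by the preceding corollary these atoms form an at most countable set. So everything reduces to proving that $\mu$ charges no non-atomic part.

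Next I would split $\hat G$ using condition~(2). Set $X_0=\{\pi\in\hat G: \pi|_H \text{ is trivial}\}$, which is a Borel subset of $\hat G$ (one verifies this from a countable dense subset of $H$ together with measurability of the maps $\pi\mapsto\la\pi(h)\xi,\eta\ra$). Condition~(2) says $\hat G=X_0\cup X_1$, where $X_1$ consists of the subrepresentations of $\lambda_G$; since every member of $X_1$ is an atom of $\mu$, we have $\hat G\setminus X_0\subseteq\{\text{atoms}\}$. Thus it suffices to prove that $\mu(X_0)=0$, for then $\mu$ is supported on its atoms.

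The equality $\mu(X_0)=0$ is the step I expect to carry the real content. I would restrict the disintegration to the Borel set $X_0$, obtaining a subrepresentation $\rho=\int_{X_0}^\oplus \pi\otimes I_{n(\pi)}\, d\mu(\pi)$ of $\lambda_G$. Because every fibre over $X_0$ is trivial on $H$, we get $\rho(h)=I$ for all $h\in H$, so $\rho$ is trivial on $H$. On the other hand, as a subrepresentation of $\lambda_G$, all matrix coefficients of $\rho$ lie in $A(G)\subseteq C_0(G)$. If $\rho\neq 0$, choose a unit vector $\xi$ in its space and put $\phi(g)=\la\rho(g)\xi,\xi\ra\in C_0(G)$. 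Triviality of $\rho$ on $H$ gives $\phi(gh)=\phi(g)$ for all $h\in H$, so $\phi$ is constant equal to $\phi(e)=1$ along the non-compact closed subgroup $H$, contradicting $\phi\in C_0(G)$. Hence $\rho=0$ and $\mu(X_0)=0$.

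Combining these, $\mu$ is concentrated on the countably many atoms coming from irreducible subrepresentations of $\lambda_G$, the direct integral collapses to a direct sum $\lambda_G\cong\bigoplus_i \pi_i\otimes I_{n_i}$, and $\lambda_G$ is completely reducible. The points that deserve explicit justification beyond the argument above are the Borel measurability of $X_0$ and the legitimacy of restricting the Plancherel disintegration to it; both are routine in the second countable type I setting but should be stated. I regard the $C_0$-obstruction in the third paragraph, which is exactly where condition~(2) and the non-compactness of $H$ interact with the inclusion $A(G)\subseteq C_0(G)$, as the heart of the proof.
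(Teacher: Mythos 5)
Your proposal is correct and follows essentially the same route as the paper: disintegrate $\lambda_G$ over $\hat G$, split $\hat G$ into the Borel set of classes trivial on $H$ and its complement, use the countability of inequivalent irreducible subrepresentations to handle the complement, and kill the trivial-on-$H$ part via the $C_0$-obstruction coming from $A(G)\subseteq C_0(G)$ and the non-compactness of $H$. The only difference is cosmetic: you spell out the $C_0$ argument that the paper compresses into the single sentence ``$\lambda_G$ has no subrepresentation which is trivial on a non-compact subgroup.''
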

\begin{proof}
For each $p\in\hat G$, we let $\pi_p$ denote a representative of the class $p$, and we assume that the choice of representative is made in a measurable way (\cite[Lemma 7.39]{MR1397028}). We write the left regular representation as a direct integral of irreducibles,
$$
\lambda_G = \int_{\hat G}^\oplus n_p\pi_p \,d\mu(p),
$$
where $\mu$ is a Borel measure on $\hat G$ and $n_p\in\{0,1,2,\ldots,\infty\}$ (see \cite[Theorem 7.40]{MR1397028}). Let $A = \{p \in\hat G \mid \pi_p(h) = 1 \text{ for all } h\in H \}$ and let $B = \hat G \setminus A$. It is not hard to check that $A\subseteq \hat G$ is a Borel set for the Mackey Borel structure.

We note that if $\pi_p\in B$, then by assumption $\pi_p$ is a subrepresentation of $\lambda_G$. By the previous corollary, $B$ is countable. Since $\lambda_G$ has no subrepresentation which is trivial on a non-compact subgroup, we must have $\mu(A) = 0$. Then
$$
\lambda_G = \int_{B}^\oplus n_p\pi_p \,d\mu(p),
$$
and since $B$ is countable, $\lambda_G$ is a direct sum of irreducibles.
\end{proof}

When $\pi$ is a representation and $\alpha$ is a cardinal number we denote by $\pi^\alpha$ the direct sum of $\alpha$ copies of $\pi$. We say that $\pi^\alpha$ is a multiple of $\pi$.

\begin{lem}\label{lem:split}
Let $G$ be a locally compact, second countable group with left regular representation $\lambda_G$ and a closed subgroup $H$ such that
\begin{enumerate}
	\item $G$ is of type I;
	\item Every irreducible unitary representation of $G$ is either trivial on $H$ or is a subrepresentation of $\lambda_G$;
	\item $\lambda_G$ is completely reducible.
	\end{enumerate}
Then every unitary representation $\pi$ of $G$ is a sum $\sigma_1\oplus\sigma_2$, where $\sigma_1$ is trivial on $H$ and $\sigma_2$ is a subrepresentation of a multiple of $\lambda_G$.
\end{lem}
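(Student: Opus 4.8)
The plan is to run the same type~I disintegration used in the proof of Proposition~\ref{prop:completely-reducible}, but now applied to an arbitrary representation $\pi$ instead of to $\lambda$. I may assume, as is implicit throughout, that $\pi$ acts on a separable Hilbert space. Since $G$ is type~I and second countable, I would write
$$
\pi \cong \int_{\hat G}^\oplus m(p)\,\pi_p \ d\nu(p)
$$
for a Borel measure $\nu$ on $\hat G$ and a measurable multiplicity function $m\colon\hat G\to\{0,1,2,\ldots,\infty\}$ (see \cite[Chapter~7]{Folland}). As before, set $A = \{[p]\in\hat G \mid p(h)=1 \text{ for all } h\in H\}$ and $B = \hat G\setminus A$; recall from the proof of Proposition~\ref{prop:completely-reducible} that $A$ is Borel. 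Splitting the direct integral along the Borel partition $\hat G = A\sqcup B$ (via the decomposable, hence $G$-invariant, projection given by $\chi_B$) yields an orthogonal $G$-invariant decomposition $\pi\cong\sigma_1\oplus\sigma_2$ with $\sigma_1 = \int_A^\oplus m(p)\,\pi_p\ d\nu(p)$ and $\sigma_2 = \int_B^\oplus m(p)\,\pi_p\ d\nu(p)$.

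For $\sigma_1$ I would simply note that every $\pi_p$ with $[p]\in A$ is trivial on $H$, so $\sigma_1(h) = \int_A^\oplus m(p)\,\pi_p(h)\ d\nu(p) = \id$ for each $h\in H$, whence $\sigma_1$ is trivial on $H$. The content lies in $\sigma_2$. By condition~(2) every $[p]\in B$ is a subrepresentation of $\lambda$, and by the corollary above (the regular representation of a second countable group has at most countably many inequivalent irreducible subrepresentations) the set $B$ is therefore countable. Consequently $\nu|_B$ is carried by countably many atoms, and the direct integral over $B$ collapses to a genuine direct sum $\sigma_2\cong\bigoplus_{[p]\in B} m(p)\,\pi_p$.

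It then remains to compare multiplicities. Writing the completely reducible representation as $\lambda\cong\bigoplus_{[p]} n(p)\,\pi_p$, condition~(2) forces $n(p)\geq 1$ for every $[p]\in B$, so in $\lambda^\infty$ each such $\pi_p$ occurs with multiplicity $\infty$. Since $m(p)\leq\infty$ for every $p$ and $\sigma_2$ is supported on $B$, the multiplicity of each irreducible in $\sigma_2$ is bounded by its multiplicity in $\lambda^\infty$, so $\sigma_2\leq\lambda^\infty$. Combining the two parts gives $\pi\cong\sigma_1\oplus\sigma_2$ with $\sigma_1$ trivial on $H$ and $\sigma_2\leq\lambda^\infty$, as required. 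I expect the only delicate point to be the bookkeeping that turns $\sigma_2$ into a countable direct sum: once the corollary forces $B$ to be countable, the multiplicity comparison with $\lambda^\infty$ is routine and the triviality of $\sigma_1$ on $H$ is immediate from the disintegration; the reliance on type~I (for a well-behaved central decomposition over the standard Borel space $\hat G$) and on separability of $\pi$ are the essential structural inputs.
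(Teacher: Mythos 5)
Your proposal is correct and follows essentially the same route as the paper: disintegrate $\pi$ over $\hat G$, split along the Borel partition into the classes trivial on $H$ and their complement $B$, use the countability corollary to collapse the integral over $B$ to a direct sum, and dominate it by $\lambda^\infty$ via multiplicities. The paper phrases the last step as $\sigma_2\leq\bigoplus_{p\in B}n_p\pi_p\leq\lambda^\infty$ using $B\subseteq C$ directly rather than your explicit multiplicity count, but the content is identical.
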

\begin{proof}
As in the previous proof, the basic idea is to use disintegration theory. However, this idea only applies if $\pi$ is a representation on a separable Hilbert space. There is a standard way of getting around the issue of separability: By an application of Zorn's lemma, we may write $\pi$ is a direct sum $\bigoplus_i \pi_i$ of cyclic representations $\pi_i$, so clearly it is enough to prove the lemma under the additional assumption that $\pi$ is cyclic. Since $G$ is second countable, $\pi$ then represents $G$ on a separable Hilbert space.

For each $p\in\hat G$, we let $\pi_p$ denote a representative of the class $p$, and we assume that the choice of representative is made in a measurable way (\cite[Lemma 7.39]{MR1397028}). 

We may write $\pi$ as a direct integral of irreducibles,
$$
\pi = \int_{\hat G}^\oplus n_p\pi_p \,d\mu(p),
$$
where $\mu$ is a Borel measure on $\hat G$ and $n_p\in\{0,1,2,\ldots,\infty\}$ (see \cite[Theorem 7.40 ]{MR1397028}). Let $A = \{p \in\hat G \mid \pi_p(h) = 1 \text{ for all } h\in H \}$ and let $B = \hat G \setminus A$. Then $A\subseteq \hat G$ is a Borel set. By assumption, there is a decomposition
$$
\lambda_G = \bigoplus_{p\in C} m_p \pi_p
$$
for some countable $C\subseteq \hat G$ and suitable multiplicities $m_p\in\{1,2,\ldots,\infty\}$. Also, it follows from our assumptions that $B\subseteq C$. If
$$
\sigma_1 = \int_{A}^\oplus n_p\pi_p \,d\mu(p), \qquad \sigma_2 = \int_{B}^\oplus n_p\pi_p \,d\mu(p),
$$
then we see that
$$
\pi = \sigma_1 \oplus \sigma_2,
$$
where $\sigma_1$ is trivial on $H$. As $B$ is countable, the integral defining $\sigma_2$ is actually a direct sum, so that $\sigma_2$ is a subrepresentation of 
$$
\bigoplus_{p\in B} n_p \pi_p
$$
which in turn is a subrepresentation of $\lambda_G\oplus\lambda_G\oplus\cdots$. Hence $\sigma_2$ is a subrepresentation of a multiple of $\lambda_G$.
\end{proof}

\begin{lem}\label{lem:AB}
Let $G$ be a locally compact group with left regular representation $\lambda_G$ and a closed, non-compact subgroup $H$. Suppose every unitary representation $\pi$ of $G$ is a sum $\sigma_1\oplus\sigma_2$, where $\sigma_1$ is trivial on $H$ and $\sigma_2$ is a subrepresentation of a multiple of $\lambda_G$. Then $A(G) = B(G)\cap C_0(G)$.
\end{lem}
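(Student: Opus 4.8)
The plan is to prove the nontrivial inclusion $B(G)\cap C_0(G)\subseteq A(G)$, the reverse inclusion being automatic since $A(G)\subseteq B(G)$ and $A(G)\subseteq C_0(G)$. So I fix $\phi\in B(G)\cap C_0(G)$ and write $\phi(g)=\la\pi(g)x,y\ra$ for some unitary representation $\pi$ on a Hilbert space $\mathcal H$ and vectors $x,y\in\mathcal H$. Invoking the hypothesis, I decompose $\pi=\sigma_1\oplus\sigma_2$ with $\sigma_1$ trivial on $H$ and $\sigma_2\leq\lambda^\infty$, and I split $x=x_1\oplus x_2$, $y=y_1\oplus y_2$ accordingly along $\mathcal H=\mathcal H_1\oplus\mathcal H_2$. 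This decomposes $\phi=\phi_1+\phi_2$, where $\phi_i(g)=\la\sigma_i(g)x_i,y_i\ra$ is a matrix coefficient of $\sigma_i$.

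First I would show that $\phi_2\in A(G)$. Since $\sigma_2$ is unitarily equivalent to a subrepresentation of $\lambda^\infty$, the function $\phi_2$ is a matrix coefficient of $\lambda^\infty$, say $\phi_2(g)=\la\lambda^\infty(g)\xi,\eta\ra$ with $\xi=(\xi_n)_n$ and $\eta=(\eta_n)_n$ in $\bigoplus_n L^2(G)$. Then $\phi_2=\sum_n\la\lambda(\cdot)\xi_n,\eta_n\ra$, a series each of whose terms lies in $A(G)$, and the estimate $\sum_n\|\la\lambda(\cdot)\xi_n,\eta_n\ra\|\leq\sum_n\|\xi_n\|\,\|\eta_n\|\leq\|\xi\|\,\|\eta\|<\infty$ (Cauchy--Schwarz) shows the series converges absolutely. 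As $A(G)$ is a Banach space, $\phi_2\in A(G)$.

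The key step is then to show $\phi_1=0$. Because $\sigma_1$ is trivial on $H$, the coefficient $\phi_1$ is right $H$-invariant: for all $h\in H$ one has $\phi_1(gh)=\la\sigma_1(g)\sigma_1(h)x_1,y_1\ra=\phi_1(g)$. On the other hand $\phi_1=\phi-\phi_2\in C_0(G)$, since both $\phi$ and $\phi_2$ vanish at infinity. If $\phi_1(g_0)\neq 0$ for some $g_0\in G$, then $|\phi_1|$ is constantly equal to $|\phi_1(g_0)|>0$ on the whole coset $g_0H$, which is homeomorphic to the non-compact closed subgroup $H$; hence the sublevel set $\{\,g\in G:|\phi_1(g)|\geq|\phi_1(g_0)|/2\,\}$ contains a non-compact set, contradicting $\phi_1\in C_0(G)$. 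Therefore $\phi_1=0$ and $\phi=\phi_2\in A(G)$.

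The main obstacle is the passage from $\sigma_2\leq\lambda^\infty$ to membership in $A(G)$: one must check that a matrix coefficient of the infinite amplification $\lambda^\infty$ still lies in $A(G)$, which is precisely where the Cauchy--Schwarz estimate together with completeness of $A(G)$ enters. Everything else---the splitting of $\phi$ induced by the decomposition of $\pi$, and the rigidity forcing the $H$-invariant piece $\phi_1$ to vanish---follows directly from right $H$-invariance and the non-compactness of $H$.
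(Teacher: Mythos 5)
Your proposal is correct and follows essentially the same route as the paper: split $\phi=\phi_1+\phi_2$ according to $\pi=\sigma_1\oplus\sigma_2$, show $\phi_2\in A(G)$ via the absolutely convergent series of coefficients of $\lambda$ and completeness of $A(G)$, then conclude $\phi_1=\phi-\phi_2\in C_0(G)$ is constant on the cosets of the non-compact subgroup $H$ and hence zero. You in fact spell out the coset argument slightly more explicitly than the paper does, but the substance is identical.
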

\begin{proof}
The inclusion $A(G) \subseteq B(G) \cap C_0(G)$ holds for any locally compact group $G$. Suppose $\phi\in B(G)\cap C_0(G)$. Then there are a unitary representation $\pi$ of $G$ on some Hilbert space $\mc H$ and vectors $x,y\in\mc H$ such that
$$
\phi(g) = \la \pi(g)x,y\ra \qquad\text{for all } g\in G.
$$
By assumption we may split $\pi = \sigma_1\oplus\sigma_2$. Accordingly, we split $\phi = \phi_1 + \phi_2$, where $\phi_1$ is a coefficient of $\sigma_1$ etc. We will show that $\phi_1 = 0$ and $\phi_2\in A(G)$, which will complete the proof.

Since $\sigma_2$ is a subrepresentation of a multiple of $\lambda_G$, we see that $\phi_2$ is of the form
$$
\phi_2(g) = \sum_i \la \lambda_G(g)x_i,y_i\ra
$$
for some $x_i,y_i\in L^2(G)$ with $\sum_i \| x_i\|^2 < \infty$ and $\sum_i \| y_i\|^2 < \infty$. Each of the maps
$$
g\mapsto \la \lambda_G(g)x_i,y_i\ra
$$
is in $A(G)$ with norm at most $\|x_i\|\|y_i\|$. Since $A(G)$ is a Banach space and $\sum_i \|x_i\|\|y_i\| < \infty$, we deduce that $\phi_2 \in A(G)$, and in particular $\phi_2\in C_0(G)$. It then follows that $\phi_1\in C_0(G)$. Since $\sigma_1$ is trivial on $H$, we see that $\phi_1$ is constant on $H$ cosets. Since $H$ is non-compact, we deduce that $\phi_1 = 0$. Then $\phi = \phi_2\in A(G)$. This proves $B(G)\cap C_0(G) = A(G)$.
\end{proof}

Theorem~\ref{thm:strategy} is an easy consequence of the previous statements.

\begin{proof}[Proof of Theorem~\ref{thm:strategy}]
We assume that the locally compact, second countable group $G$ satisfies the two conditions in the statement of the theorem. It follows from Proposition~\ref{prop:completely-reducible} that $\lambda_G$ is completely reducible. By Lemma~\ref{lem:split}, every unitary representation $\pi$ of $G$ is a sum $\sigma_1\oplus\sigma_2$, where $\sigma_1$ is trivial on $H$ and $\sigma_2$ is a subrepresentation of a multiple of $\lambda_G$. From Lemma~\ref{lem:AB} we conclude that $A(G) = B(G) \cap C_0(G)$.
\end{proof}

\section{Invariant measures on homogeneous spaces}\label{sec:inv-meas}

To describe the irreducible representations of the groups $P$ in Theorems~\ref{thm:fourier-PP} and~\ref{thm:fourier-P}, we rely on a general method known to the common man as the Mackey Machine. Essential in the Mackey Machine is the notion of induced representations. For a general introduction to the theory of induced representations we refer to \cite[Chapter 6]{MR1397028} which also contains a description of (a simple version of) the Mackey Machine. The general results about the Mackey Machine can be found in the original paper \cite{MR0098328}. See also the book \cite{MR3012851}.

The construction of an induced representation from a closed subgroup $H$ to a group $G$ is more easily described when the homogeneous space $G/H$ admits an invariant measure for the $G$-action given by left translation. Regarding homogeneous spaces and invariant measures we record the following easy (and well-known) facts.

\begin{lem}\label{lem:cosetspaces}
Consider topological groups $G$, $N$, $H$, $K$, $A$, $B$ and topological spaces $X$ and $Y$.

\begin{enumerate}
	\item 
Suppose $G$ is the semi-direct product $G = N\rtimes H$, where $N$ is normal in $G$. If $K\leq H$ is a closed subgroup of $H$, then there is a canonical isomorphism
$$
NH/NK \simeq H/K
$$
as $G$-spaces. Here the $G$-action on $H/K$ is the $H$-action, and $N$ acts trivially on $H/K$.

\item 
Suppose $G = N\times H$, and $A\leq N$, $B\leq H$ are closed subgroups. Then there is a canonical isomorphism
$$
(N\times H)/(A\times B) \simeq N/A \times H/B
$$
as $G$-spaces, where the $G$-action on $N/A \times H/B$ is the product action of $N\times H$.

\item
Suppose $G\acts X$ and $H\acts Y$ have invariant, $\sigma$-finite Borel measures. Then the product $G\times H \acts X\times Y$ has an invariant, $\sigma$-finite Borel measure.

\item
Suppose $G$ is compact (or just locally compact, amenable) and $X$ is compact. Then any action $G\acts X$ has an invariant probability measure.
\end{enumerate}
\end{lem}
\begin{proof}
\mbox{}
\begin{enumerate}
	\item The map $[nh]_{NK} \mapsto [h]_K$ is a well-defined, equivariant homeomorphism.
	\item The map $[(n,h)]_{A\times B} \mapsto ([n]_A , [h]_B)$ is a well-defined, equivariant homeomorphism.
	\item Take the product measure on $X\times Y$ of the invariant measures on $X$ and $Y$.
	\item This is Proposition~5.4 in \cite{MR767264}.\qedhere
\end{enumerate}
\end{proof}

The following lemma is often useful when one wants to verify condition (2) of Theorem~\ref{thm:strategy}.

\begin{lem}\label{lem:trivial-induction}
Let $G$ be a locally compact group with closed subgroups $N\subseteq H\subseteq G$, and suppose $N\triangleleft G$. If $\sigma$ is a unitary representation of $H$ which is trivial on $N$, and if $G/H$ admits a $G$-invariant measure, then the induced representation $\Ind_H^G \sigma$ is also trivial on $N$.
\end{lem}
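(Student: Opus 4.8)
The plan is to prove that $\Ind_H^G\sigma$ is trivial on $N$ by working directly with the standard model of the induced representation and exploiting the fact that $N$ is normal in $G$. Recall that, since $G/H$ carries a $G$-invariant measure $\nu$, the induced representation $\pi = \Ind_H^G\sigma$ acts on the Hilbert space $\mc H$ of (equivalence classes of) measurable functions $f\colon G\to \mc H_\sigma$ satisfying the covariance condition $f(gh) = \sigma(h)^{-1}f(g)$ for $h\in H$ and $\int_{G/H}\|f(g)\|^2\,d\nu(gH) < \infty$, with $G$ acting by left translation, $(\pi(x)f)(g) = f(x^{-1}g)$. The goal is to show $\pi(n) = \id$ for every $n\in N$.

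First I would fix $n\in N$ and $f\in\mc H$ and compute $(\pi(n)f)(g) = f(n^{-1}g)$. The key trick is to use normality of $N$: since $N\triangleleft G$, for any $g\in G$ we have $g^{-1}n^{-1}g \in N \subseteq H$, so I can rewrite $n^{-1}g = g\,(g^{-1}n^{-1}g)$ and then apply the covariance relation. Writing $m = g^{-1}n^{-1}g\in N\subseteq H$, the covariance condition gives
$$
f(n^{-1}g) = f(g\,m) = \sigma(m)^{-1}f(g) = \sigma(g^{-1}n^{-1}g)^{-1}f(g).
$$
Now I invoke the hypothesis that $\sigma$ is trivial on $N$: since $g^{-1}n^{-1}g\in N$, we have $\sigma(g^{-1}n^{-1}g) = \id$, and therefore $f(n^{-1}g) = f(g)$ for (almost) every $g$. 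This shows $(\pi(n)f)(g) = f(g)$, i.e.\ $\pi(n)f = f$.

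Since $n\in N$ and $f\in\mc H$ were arbitrary, this establishes $\pi(n) = \id$ for all $n\in N$, which is exactly the assertion that $\Ind_H^G\sigma$ is trivial on $N$. The only genuine point requiring care is the interplay between the almost-everywhere identities and the structure of the representation space, but because the rewriting $f(n^{-1}g) = f(g)$ holds pointwise (wherever the covariance relation is applied) via the algebraic identity $n^{-1}g = g\,(g^{-1}n^{-1}g)$, no measure-theoretic subtlety actually arises beyond the standard one already built into the definition of $\mc H$. I expect the main conceptual step—and the one worth stating explicitly—to be the use of normality of $N$ to move the element $n$ past $g$ into $H$, since this is what converts the left-translation action into an application of the covariance condition and hence lets the triviality of $\sigma$ on $N$ take effect; the remaining verifications are routine.
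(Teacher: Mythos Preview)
Your proof is correct and follows essentially the same approach as the paper: both use normality of $N$ to rewrite $n^{-1}g = g(g^{-1}n^{-1}g)$, then apply the covariance relation together with triviality of $\sigma$ on $N$. The only cosmetic difference is that the paper works on the dense subspace $\mc F_0$ of continuous functions with compactly supported image in $G/H$, thereby sidestepping the almost-everywhere issues you flag, whereas you work directly in the full Hilbert space; either is fine.
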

\begin{proof}
The kernel of an induced representation can even be described explicitly (see e.g. \cite[Theorem~2.45]{MR3012851}).
\end{proof}

\section{Proof of Theorem~\ref{thm:fourier-PP}}\label{sec:PP}

In this section we prove Theorem~\ref{thm:fourier-PP}. Let $P$ be the group defined in \eqref{eq:PP}. We note first that the group $P$ is of type I: the group $P$ is the connected component of a real algebraic group, and such groups are of type I according to \cite[Theorem 1]{MR0099380}.

The unitary dual of $P$, i.e. the equivalence classes of the irreducible representations of $P$, can be determined using the Mackey Machine (see e.g. \cite[Chapter~6]{MR1397028} and \cite{MR3012851}). Observe that $P = N_0\rtimes P_0$, where
\begin{align}
\label{eq:Pzero}
P_0 &= \left\{\begin{pmatrix}
\lambda & a & 0 \\
0 & \lambda^{-1} & 0 \\
0 & 0 & 1
\end{pmatrix}
\middle\vert\ a\in\R,\ \lambda>0
\right\},
\displaybreak[0] \\[15pt]
N_0 &= \left\{\begin{pmatrix}
1 & 0 & c \\
0 & 1 & b \\
0 & 0 & 1
\end{pmatrix}
\middle\vert\ b,c\in\R
\right\}.
\end{align}
As in \eqref{eq:SL3}, we identify $N_0\simeq\R^2$ and consider $P_0\subseteq\SL_2(\R)$ in the obvious way so that $P = N_0\rtimes P_0$ is a subgroup of $\R^2\rtimes\SL_2(\R)$, where $\SL_2(\R)$ acts on $\R^2$ by matrix multiplication. The dual action $P_0\acts\hat N_0$ is then given by $(p.\nu)(n) = \nu(p^{-1}.n)$ for $p\in P_0$, $\nu\in\hat N_0$, and $n\in N_0$. Under the usual identification $\hat N_0 \simeq \R^2$ we see that $p\in P_0$ acts on $\R^2$ by matrix multiplication by the transpose of the inverse of $p$. Thus, if $p$ has the form in \eqref{eq:Pzero}, then the action of $p$ on $\R^2$ is
$$
\begin{pmatrix}
	s \\ t
\end{pmatrix}
\mapsto
\begin{pmatrix}
	\lambda^{-1} & 0 \\
	-a & \lambda
\end{pmatrix}
\begin{pmatrix}
	s \\ t
\end{pmatrix}.
$$
There are five orbits in $\hat N_0$ under this action, which give rise to five families of irreducible representations of $P$. As representatives of the orbits we choose the points
$$
\binom10,\
\binom{-1}0,\
\binom01,\
\binom0{-1},\
\binom00.
$$
The first two points $(1,0)$ and $(-1,0)$ have trivial stabilizers in $P_0$ and give rise to two irreducible representations $\pi^+$ and $\pi^-$. The union of the orbits of $(\pm 1,0)$ has complement of (Plancherel) measure zero, and the regular representation $\lambda_P$ of $P$ is the countably infinite direct sum of $\pi^+\oplus\pi^-$ (see e.g. \cite[Section~1]{MR509261} for a proof).

Let $\nu\in\hat N_0$ be represented by one of the points $(0,1)$ or $(0,-1)$. Then $\nu$ is trivial on the subgroup $N_1\subseteq N_0$ defined as
\begin{align}
\label{eq:N1}
N_1 &= \left\{\begin{pmatrix}
1 & 0 & c \\
0 & 1 & 0 \\
0 & 0 & 1
\end{pmatrix}
\middle\vert\ c\in\R
\right\}.
\end{align}
 The stabilizer of $\nu$ in $P_0$ is the subgroup
\begin{align}
\label{eq:P1}
P_1 &= \left\{\begin{pmatrix}
1 & a & 0 \\
0 & 1 & 0 \\
0 & 0 & 1
\end{pmatrix}
\middle\vert\ a\in\R
\right\},
\end{align}
and $\nu$ extends to a character $\tilde\nu$ of $N_0P_1$ being trivial on $P_1$. An irreducible representation of $P$ obtained from $\nu$ is of the form
$$
\pi = \Ind_{N_0P_1}^P (\tilde\nu\otimes\gamma),
$$
where $\gamma\in\hat P_1$. The representation $\tilde\nu\otimes\gamma$ is clearly trivial on $N_1$. Since $N_0P_1$ is normal in $P$, the homogeneous $P/(N_0P_1)$ has a $P$-invariant measure. It follows from Lemma~\ref{lem:trivial-induction} that $\pi$ is trivial on $N_1$.

The last point $(0,0)$ has stabilizer $P_0$ and determines the irreducible representations of $P$ that factor through $P_0$, i.e. are trivial on $N_0$. Such representations are clearly trivial on $N_1$.

Since the group $N_1$ is non-compact, we have verified the conditions of Theorem~\ref{thm:strategy} for the group $P$. This proves Theorem~\ref{thm:fourier-PP}.

\section{Minimal parabolic subgroups}\label{sec:simple-groups}

In this section we prove Theorem~\ref{thm:fourier-P}. Let $\mb F$ be one of the four division algebras: the real numbers $\R$, the complex numbers $\C$, the quaternions $\mb H$, or the octonions $\mb O$. Let $\mb F^{p,q}$ denote the real vector space $\mb F^{p+q}$ equipped with the hermitian form
$$
\la x,y\ra = \sum_{i=1}^p  x_i\bar y_i - \sum_{i=p+1}^{p+q}  x_i\bar y_i.
$$
We also think of $\mb F^{p,q}$ as a right $\mb F$-module. Of course, $\mb F^n = \mb F^{n,0}$. Let $G$ be one of the following groups:
\begin{align*}
\SO_0(n,1) &= \text{the identity component of the orthogonal group of } \R^{n,1}; \\
\SU(n,1) &= \text{the special unitary group of } \C^{n,1}; \\
\Sp(n,1) &= \text{the symplectic (quaternion--unitary) group of } \mb H^{n,1}; \\
\FF      &= \text{the exceptional rank one group of type } F_4.
\end{align*}

Any connected simple Lie group of real rank one is locally isomorphic to one of the groups above (see e.g. \cite[p.~426]{MR1920389}). A thorough account of the exceptional group $\FF$ can be found in \cite{MR560851}.

Let $G = KAN$ be an Iwasawa decomposition of $G$. Then $K$ is a maximal compact subgroup, $A$ is abelian of dimension $1$, and $N$ is nilpotent. Let $M$ be the centralizer of $A$ in $K$, and let $P = MAN$ be the minimal parabolic subgroup of $G$. Let $Z$ denote the center of the nilpotent group $N$. Then $Z$ is a non-compact subgroup of $P$.

The unitary dual of $P$ can be determined using the Mackey Machine, and we describe it below. Details can be found in \cite{MR0422519} on which this is based (see also \cite{MR0507242}). With the knowledge of the unitary dual of $P$, it is not difficult to verify the conditions of Theorem~\ref{thm:strategy} for the group $P$. We do this in Proposition~\ref{prop:dichotomy} and Proposition~\ref{prop:typeI} below, and this then completes the proof of Theorem~\ref{thm:fourier-P}.

Let $\chi$ be a character of $N$, and let $M_\chi$ be the stabilizer subgroup in $MA$ of $\chi$ (the action of $MA$ on $\hat N$ is the dual action of the conjugation action of $MA$ on $N$). If $\chi$ is the trivial character, then $M_\chi$ is $MA$, but otherwise it is not difficult to show that $M_\chi$ is a closed subgroup of $M$.

The character $\chi$ extends to a character of $M_\chi N$ being trivial on $M_\chi$. With $\gamma\in \hat M_\chi$, we obtain an irreducible unitary representation $\pi_{\chi,\gamma}$ of $P$ by induction,
$$
\pi_{\chi,\gamma} = \Ind_{M_\chi N}^{MAN} (\chi\otimes\gamma).
$$
The remaining irreducible unitary representations of $P$ do not arise from characters on $N$. These occur only when $\mb F\neq\R$. Let $\lambda$ be a non-zero functional on $\mathfrak z = \ImF$, the Lie algebra of $Z$. It is known that there exists an infinite dimensional irreducible representation $\eta_\lambda$ of $N$, uniquely determined be the property
$$
\eta_\lambda(zn) = e^{i\lambda(\log z)}\eta_\lambda(n), \qquad z\in Z,\ n\in N.
$$
Moreover, $\eta_\lambda$ is uniquely determined within unitary equivalence by the central character $\lambda$ (see \cite[Lemma~4.4]{MR0422519}).
Let $M_\lambda$ denote the stabilizer in $MA$ of $\eta_\lambda$. Then $\eta_\lambda$ extends to a representation of $M_\lambda N$ as discussed in \cite[Sections~7 and 8]{MR0422519}.

With $\gamma\in\hat M_\lambda$, we obtain an irreducible unitary representation $\pi_{\lambda,\gamma}$ of $P$ by induction,
$$
\pi_{\lambda,\gamma} = \Ind_{M_\lambda N}^{MAN} (\eta_\lambda\otimes\gamma).
$$
This completes the description of the unitary dual of $P$.

\begin{prop}\label{prop:dichotomy}
Any irreducible unitary representation of $P$ is either trivial on the center $Z$ of $N$ or is a subrepresentation of $\lambda_P$.
\end{prop}
\begin{proof}
Consider first a representation $\pi_{\chi,\gamma}$, where $\chi\in\hat N$ is a character. If $\chi$ is the trivial character, then $\pi_{\chi,\gamma}$ factors through $P/N$ and is clearly trivial on $Z$.

Suppose next that $\chi$ is a non-trivial character that annihilates $Z$. Since $Z$ is normal in $P$, it follows from Lemma~\ref{lem:trivial-induction} that $\pi_{\chi,\gamma}$ is trivial on $Z$, once we show that the homogeneous space $P / M_\chi N$ admits a $P$-invariant measure. Using Lemma~\ref{lem:cosetspaces} we find
$$
P / M_\chi N \simeq MA / M_\chi \simeq M/M_\chi \times A.
$$
The left translation action $A\acts A$ has the Haar measure as an invariant measure. Since $M$ is compact, the action $M\acts M/M_\chi$ has an invariant measure. It follows that $P\acts M_\chi N$ has an invariant measure, and then by Lemma~\ref{lem:trivial-induction} the representation $\pi_{\chi,\gamma}$ is trivial on $Z$.

Suppose instead that $\chi$ does not annihilate $Z$. This happens only when $\mb F = \R$. The stabilizer group $M_\chi$ is compact, and hence $\gamma\in\hat M_\chi$ is a subrepresentation of the regular representation of $M_\chi$. If $\dim N\neq 1$, then the action of $MA$ on the non-zero characters of $N$ is transitive. In particular, the orbit has positive Plancherel measure in $\hat N$. If $\dim N = 1$, then the orbit of $\chi$ is either $\R_+$ or $\R_-$ inside $\hat N \simeq \R$, and both of these sets have positive measure. By \cite[Corollary~11.1]{MR0342641} we conclude that $\pi_{\chi,\gamma}$ is a subrepresentation of $\lambda_P$.

Consider finally a representation $\pi_{\lambda,\gamma}$, where $\lambda$ is a non-zero functional on the Lie algebra $\mathfrak z$ of $Z$. In this case, $\mb F\neq\R$.

It is not difficult to show that $M_\lambda$ is a closed subgroup of $M$ and hence compact. Therefore, $\gamma\in\hat M_\lambda$ is a subrepresentation of the regular representation of $M_\lambda$. Again by \cite[Corollary~11.1]{MR0342641}, to conclude that $\pi_{\lambda,\gamma}$ is a subrepresentation of $\lambda_P$, it remains to show that the orbit of $\eta_\lambda$ in $\hat N$ has positive Plancherel measure.

The characters in $\hat N$ all annihilate the non-compact group $Z$ and hence must form a null set for the Plancherel measure. If $\mb F = \mb H$ or $\mb F = \mb O$, then $MA$ acts transitively on $\{\eta_\lambda \in \hat N \mid \lambda\in\mathfrak z^*\setminus \{0\}\}$, and therefore the orbit of $\eta_\lambda$ must have positive Plancherel measure.

If $\mb F = \C$, then the action of $MA$ on the representations $\{\eta_\lambda \in \hat N \mid \lambda\in\mathfrak z^*\setminus \{0\}\}$ has two orbits. The group $N$ is the Heisenberg group of dimension $2n -1$, and the Plancherel measure for the Heisenberg group can be found in \cite[p.~241]{MR1397028}. We see that the measure of the orbit of $\eta_\lambda$ is
$$
\mu_N(MA.\eta_\lambda) = \int_0^\infty |h|^{n-1} \,dh.
$$
Hence the orbit of $\eta_\lambda$ has positive, in fact infinite, measure. By \cite[Corollary~11.1]{MR0342641}, we conclude that $\pi_{\lambda,\gamma}$ is a subrepresentation of $\lambda_P$.
\end{proof}

\begin{prop}\label{prop:typeI}
The minimal parabolic subgroup $P$ is of type I.
\end{prop}
\begin{proof}
We apply \cite[Theorem~9.3]{MR0098328} to show that $P$ is of type I. It is known that connected nilpotent Lie groups are of type I (see \cite[Corollaire~4]{MR0115097}), and it follows that $N$ is of type I. Hence $\hat N$ is a standard Borel space. One can check that the action $MA\acts \hat N$ has only finitely many orbits (the exact number depends on $\mb F$ and $n$), so in particular there is a Borel set in $\hat N$ which meets each orbit exactly once. By \cite[Theorem~9.2]{MR0098328} the action $MA\acts \hat N$ is \emph{regular}, that is, $N$ is \emph{regularly embedded} in $P$.

We now verify that when $\pi\in \hat N$, the stabilizer $L_\pi = \{ g\in MA \mid g.\pi \simeq \pi\}$ is of type I. Indeed, if $\pi$ is the trivial character on $N$, then $L_\pi = MA$ is a direct product of the compact group $M$ and the abelian group $A$. Hence the stabilizer $MA$ is of type I. If $\pi$ is not the trivial character, then $L_\pi$ is a closed subgroup of $M$, hence compact. In particular the stabilizers are of type I. According to \cite[Theorem~9.3]{MR0098328} we may now conclude that $P$ is of type I.
\end{proof}

\section{Concluding remarks}\label{sec:concluding}
Theorem~\ref{thm:fourier-P} shows that Question~\ref{q1} has a positive answer for the minimal parabolic subgroups $P = MAN$ in the groups $\SO_0(n,1)$, $\SU(n,1)$, $\Sp(n,1)$ and $\FF$. One could ask if the same is true for the smaller groups $MN$, $AN$ or $N$. We will now discuss these cases. Recall from the introduction that non-compact second countable connected unimodular groups never satisfy \eqref{eq:q1}.

Let $G$ be one of the classical groups $\SO_0(n,1)$, $\SU(n,1)$, $\Sp(n,1)$, with $n\geq 2$, or the exceptional group $\FF$. Let $\mb F$ be the corresponding division algebra, $\R$, $\C$, $\mb H$, or $\mb O$. We start by discussing the groups $N$. Since $N$ is nilpotent, $N$ is unimodular. Indeed, a locally compact group $G$ is unimodular if and only if $G/Z$ is unimodular, where $Z$ is the center of $G$ (see \cite[p.~92]{MR0175995}). Induction on the length of an upper central series then shows that all locally compact nilpotent groups are unimodular. Since $N$ is also connected, it follows that
$$
A(N) \neq B(N) \cap C_0(N).
$$

Next we discuss the groups $MN$. Since $MN$ is a semi-direct product of the unimodular group $N$ by the compact group $M$, we will argue that $MN$ itself is unimodular. Indeed, this follows directly from \cite[Proposition~23]{MR0175995} but we also include another argument here. If we use $\Delta_G$ to denote the modular function of a locally compact group $G$, then since $N$ is normal in $MN$, we have $\Delta_{MN}|N = \Delta_N = 1$. Also, since $M$ is compact, $\Delta_{MN}|M = 1$. Since $M$ and $N$ generate $MN$, it follows that $\Delta_{MN} = 1$. So $MN$ is connected and unimodular, and hence
$$
A(MN) \neq B(MN) \cap C_0(MN).
$$

Alternatively, one could show that all orbits in $\hat N$ under the action of $M$ have zero Plancherel measure. This type of argument will be used below for the groups $AN$.

For the groups $\SO_0(n,1)$, $\Sp(n,1)$, and $\FF$ it will usually also be the case that Question~\ref{q1} has a negative answer for the groups $AN$ as well. However, there is one exception. If $G = \SO_0(2,1)$, then $M$ is trivial and $P$ coincides with $AN$. Hence it follows from Theorem~\ref{thm:fourier-P} that Question~\ref{q1} has an affirmative answer for the group $AN$. In this special case let us remark that $AN$ is in fact isomorphic to the $ax + b$ group, and the result that $A(AN) = B(AN)\cap C_0(AN)$ is actually the original result of Khalil from \cite{MR0350330}.

The unimodularity argument used for the groups $N$ and $MN$ does not apply to $AN$, since these groups are not unimodular (see \cite[(1.14)]{MR0507242}). As mentioned in the introduction, a group satisfying \eqref{eq:q1} has a completely reducible left regular representation, and in particular the left regular representation has irreducible subrepresentations. Then by \cite[Corollary~11.1]{MR0342641} at least one of the orbits of the action $A\acts\hat N$ must have positive Plancherel measure. To show that $A(AN) \neq B(AN) \cap C_0(AN)$ it therefore suffices to show that any orbit of $A\acts\hat N$ has zero Plancherel measure.

At this point we split the argument in cases. Consider first the case when $\mb F = \R$ and $n\geq 3$. Then $N \simeq \R^{n-1}$, and the Plancherel measure on $\hat N \simeq \R^{n-1}$ is the Lebesgue measure. Since $A$ acts on $\hat N$ by dilation, every orbit except $\{0\}$ is a half-line. Since $n\geq 3$, every half-line in $\R^{n-1}$ has vanishing Lebesgue measure, and hence every orbit in $\hat N$ has vanishing Plancherel measure.

Consider now the other cases where $\mb F$ is $\C$, $\mb H$, or $\mb O$. As mentioned earlier, the unitary dual $\hat N$ then consists of characters and the infinite dimensional representations $\hat N_r = \{\eta_\lambda\}_\lambda$ (see Section~\ref{sec:simple-groups}).

Fortunately, the Plancherel measure for $N$ is known. It is described in \cite[Section~3]{MR996553}. Since the characters are trivial on the center $Z$ of $N$ which is non-compact, the characters form a null set for the Plancherel measure. Let $k$ be the dimension of $Z$ so that $k$ is either 1, 3 or 7. If we identify $\hat N_r$ with $\mathfrak z^*\setminus\{0\}$ (the non-zero functionals on the Lie algebra of $Z$) which in turn is identified with the punctured Euclidean space $\R^k\setminus\{0\}$, then it follows from \cite[p.~524]{MR996553} that the Plancherel measure on $\hat N_r$ is absolutely continuous with respect to the Lebesgue measure.

Since $A$ acts on $\hat N_r$ by dilation, every orbit in $\hat N_r$ is a half-line. Every half-line has vanishing Lebesgue measure, unless $k=1$, and hence every orbit in $\hat N_r$ has vanishing Plancherel measure, except when $\mb F = \C$. Combined with the fact that the characters have vanishing Plancherel measure, we conclude that every orbit in $\hat N$ has vanishing Plancherel measure. We collect the discussion above in the following proposition.

\begin{prop}
Let $G$ be one of the simple Lie groups $\SO_0(n,1)$ ($n\geq 3$), $\Sp(n,1)$ ($n\geq 2$), or $\FF$. Let $MAN$ be the minimal parabolic subgroup of $G$. If $H$ is either $N$, $MN$, or $AN$ then
$$
A(H) \neq B(H)\cap C_0(H).
$$
\end{prop}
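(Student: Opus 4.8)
The plan is to treat the three subgroups $N$, $MN$ and $AN$ separately, since $N$ and $MN$ yield to a unimodularity argument while $AN$ requires a Plancherel-measure computation. The organizing principle is the fact recalled in the introduction: a non-compact, second countable, connected, unimodular group never satisfies \eqref{eq:q1}. Each of $N$ and $MN$ is non-compact, second countable and connected, so for these two groups it suffices to verify unimodularity. That $N$ is unimodular is immediate, since connected nilpotent Lie groups are unimodular (one inducts along an upper central series, using that a locally compact $G$ is unimodular whenever $G/Z(G)$ is). For $MN$ I would show the modular function $\Delta_{MN}$ is trivial: it restricts to $\Delta_N \equiv 1$ on the normal subgroup $N$ (the quotient $MN/N \cong M$ carries an invariant measure), and to the trivial character on the compact subgroup $M$; since $M$ and $N$ together generate $MN$, this forces $\Delta_{MN} \equiv 1$.

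The group $AN$ is genuinely non-unimodular, so the preceding argument fails and I would instead argue by contraposition. If $A(AN) = B(AN) \cap C_0(AN)$ held, then, again by the introduction, the left regular representation of $AN$ would be completely reducible, hence would contain an irreducible subrepresentation; by \cite[Corollary~11.1]{MR0342641} this forces at least one orbit of the dual action $A \acts \hat N$ to carry positive Plancherel measure. Thus it suffices to prove that \emph{every} orbit of $A \acts \hat N$ is Plancherel-null. The key structural observation is that $A$ acts on $\hat N$ by dilation, cf. \eqref{eq:actionA}, so that every nontrivial orbit is a half-line emanating from the origin.

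The computation then splits on the division algebra $\mb F$. When $\mb F = \R$ and $n \geq 3$ one has $N \cong \R^{n-1}$ with Lebesgue Plancherel measure, and a half-line in $\R^{n-1}$ is null precisely because $n-1 \geq 2$. When $\mb F$ is $\mb H$ or $\mb O$ (setting $n = 2$ in the octonionic case), the characters $\{\chi_v\}$ are trivial on the non-compact center $Z(N)$ and hence form a Plancherel-null set, while the remaining infinite-dimensional part $\hat N_r \cong \ImF^* \cong \R^k \setminus \{0\}$ carries a Plancherel measure absolutely continuous with respect to Lebesgue measure (made explicit in \cite[Section~3]{MR996553}), where $k = \dim_\R \ImF \in \{3,7\}$; once more the dilation half-lines are null because $k \geq 2$. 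Assembling the character part and the infinite-dimensional part shows every orbit in $\hat N$ is Plancherel-null, completing the $AN$ case.

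The main obstacle is this last case, and specifically the need to know the Plancherel measure of $N$ explicitly enough to recognize that it is absolutely continuous with respect to Lebesgue measure on the space carrying the dilation action. This is also precisely where the exclusion of $\SU(n,1)$ enters: there $\mb F = \C$ gives $k = \dim_\R \Im\;\C = 1$, so the orbits $i\R_+$ and $i\R_-$ are half-lines of \emph{positive} measure, and the argument correctly breaks down, consistently with the affirmative answer obtained for the full parabolic in Theorem~\ref{thm:fourier-P}.
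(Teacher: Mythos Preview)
Your proposal is correct and follows essentially the same approach as the paper: the unimodularity argument for $N$ and $MN$ (via the central-series induction and the restriction of $\Delta_{MN}$ to $N$ and $M$), and for $AN$ the contrapositive Plancherel-orbit argument using \cite[Corollary~11.1]{MR0342641} together with the dilation half-lines being null when $\dim_\R\ImF \geq 2$. The case split and the references (including \cite[Section~3]{MR996553} for the absolute continuity of the Plancherel measure on $\hat N_r$) match the paper's treatment.
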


As pointed out, the argument breaks down when $\mb F = \C$. So finally, we consider the group $AN$ in $G = \SU(n,1)$.

\begin{prop}
Let $G$ be the simple Lie group $\SU(n,1)$ ($n\geq 2$) with Iwasawa decomposition $G = KAN$. Then 
$$
A(AN) = B(AN)\cap C_0(AN).
$$
\end{prop}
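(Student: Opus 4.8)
The plan is to verify the two hypotheses of Theorem~\ref{thm:strategy} for the group $AN$, taking the non-compact closed subgroup to be $H = Z(N)$, the centre of the Heisenberg group $N$. Writing $AN = N\rtimes A$, the group $N$ is the $(2n-1)$-dimensional Heisenberg group and $A\simeq\R_+$ acts by the restriction of the dilation action \eqref{eq:actionA}, namely $\alpha.(w,z) = (\alpha w,\alpha^2 z)$. First I would run the Mackey Machine for $AN$ exactly as in Section~\ref{sec:irreducibles}, the only change being that now merely $A$ acts on $\hat N$ instead of $MA$. The dual $\hat N$ still consists of the characters $\chi_v$ ($v\in\C^{n-1}$), all trivial on $Z(N)$, together with the infinite-dimensional representations $\eta_m$ ($m\in\Im\C^*$). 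Restricting \eqref{eq:dual-char-action-A} and \eqref{eq:dual-inf-action-A} to $A$, the orbit of a non-zero $\chi_v$ is the ray $\{\chi_{tv}\mid t>0\}$, while $\eta_m$ has orbit $i\R_+$ or $i\R_-$ according to the sign of $m$; in every one of these cases the stabiliser in $A$ is trivial.

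Since $N$ is type I (being nilpotent) and the orbit space just described admits an evident Borel transversal (for instance $\{\chi_v\mid |v|=1\}\cup\{\eta_i,\eta_{-i}\}\cup\{\chi_0\}$, with all orbits locally closed), the action $A\acts\hat N$ is regular, and all stabilisers ($A$ over $\chi_0$, trivial otherwise) are type I. Hence $AN$ is type I by \cite[Theorem~9.3]{MR0098328}, which gives condition~(1). The Mackey Machine then shows that every irreducible representation of $AN$ is one of the following: $\sigma\circ q$ with $\sigma\in\hat A$ and $q:AN\to A$ the quotient map (these are the irreducibles trivial on $N$); $\Ind_N^{AN}(\chi_v)$ with $v\neq 0$; or $\Ind_N^{AN}(\eta_m)$.

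To establish condition~(2), I would treat these three families just as in Lemma~\ref{lem:split-SU}. The representations $\sigma\circ q$ are trivial on $N$, hence on $Z(N)$. For $\Ind_N^{AN}(\chi_v)$, the inducing character is trivial on $Z(N)\triangleleft AN$ and $AN/N\simeq A$ carries Haar measure, so Lemma~\ref{lem:trivial-induction} shows this representation is trivial on $Z(N)$ as well. Finally, for $\Ind_N^{AN}(\eta_m)$ the stabiliser is trivial, so the only remaining point is whether the orbit of $\eta_m$ has positive Plancherel measure in $\hat N$; if so, Corollary~11.1 of \cite{MR0342641} identifies the induced representation as a subrepresentation of $\lambda_{AN}$. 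With $H=Z(N)$ this yields exactly the dichotomy required by Theorem~\ref{thm:strategy}, and the conclusion $A(AN)=B(AN)\cap C_0(AN)$ follows at once.

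The crux of the argument, and the reason $\SU(n,1)$ behaves differently from the other rank-one groups in the preceding proposition, is precisely this Plancherel computation. As recalled in the proof of Lemma~\ref{lem:split-SU}, the Plancherel measure of the $(2n-1)$-dimensional Heisenberg group is carried by $\{\eta_m\}$ with density $|m|^{n-1}\,dm$ on $\Im\C^*\simeq\R^*$, the characters forming a null set. Because $\dim\Im\C = 1$, the two $A$-orbits $i\R_+$ and $i\R_-$ are exactly the two half-lines of $\R^*$, each of which therefore has positive (indeed infinite) Plancherel measure; this is the very step that breaks down once $\dim\Im\mb F>1$, as in the concluding remarks for $\SO_0(n,1)$, $\Sp(n,1)$ and $\FF$. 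Once positivity of these orbit measures is in hand, the $\eta_m$-series lands inside $\lambda_{AN}$ and the proof closes.
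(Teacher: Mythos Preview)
Your proposal is correct and follows essentially the same route as the paper: verify the hypotheses of Theorem~\ref{thm:strategy} for $AN$ with $H=Z(N)$, establish type~I via Mackey's regularity criterion (same Borel transversal $\{0\}\cup S\cup\{\pm i\}$ and same stabiliser analysis), list the three Mackey series, kill the first two on $Z(N)$ via Lemma~\ref{lem:trivial-induction}, and embed the $\eta_m$-series in $\lambda_{AN}$ using the Plancherel density $|m|^{n-1}\,dm$ on the one-dimensional $\Im\C^*$. The paper spends slightly more care showing that $S$ is Borel in the Mackey structure on $\hat N$ (via the Fell topology on the Heisenberg dual), but otherwise the arguments coincide.
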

\begin{proof}
We will verify the conditions of Theorem~\ref{thm:strategy} for the group $AN$.

First we verify that $AN$ is a group of type I. We mimic the proof of Proposition~\ref{prop:typeI}. The group $N$ is the Heisenberg group of dimension $2n-1$, and its unitary dual $\hat N$ consists of characters annihilating the center and the infinite dimensional representations $\{\eta_\lambda\}_\lambda$. Recall that $N$ is of type I, and hence $\hat N$ is a standard Borel space. 

The characters in $\hat N$, which we think of simply as $\C^{n-1}$, form an invariant subset whose orbits consist of the origin $\{0\}$ and half-lines originating at the origin. The infinite dimensional representations in $\hat N$, which we think of simply as $\R\setminus\{0\}$ also form an invariant subset which has two orbits, $\R_+$ and $\R_-$.

If $S$ denotes the unit sphere in $\C^{n-1}$, then $R = \{0\}\cup S \cup \{1,-1\}$ is a set of representatives for the orbits of $A\acts \hat N$. We claim that $R$ is a Borel subset of $\hat N$. To see this, it suffices to prove that $S$ is a Borel subset, since points are always Borel subsets in a standard Borel space.

The Fell topology on $\hat N$ is well-known (see e.g. \cite[Chapter~7]{MR1397028}). The characters form a closed subset in $\hat N$, and on the set of characters the Fell topology coincides with the Euclidean topology (on $\C^{n-1}$). In particular $S$ is closed in the Fell topology. By \cite[Theorem~7.6]{MR1397028}, the Mackey Borel structure on $\hat N$ is induced by the Fell topology, since $N$ is of type I. It follows that $S$ is a Borel set.

We conclude from \cite[Theorem~9.2]{MR0098328} that the action $A\acts \hat N$ is \emph{regular}, that is, $N$ is \emph{regularly embedded} in $AN$.

Next we verify that if $\pi\in \hat N$, then the stabilizer $A_\pi = \{ \alpha\in A \mid \alpha.\pi \simeq \pi\}$ is of type I. Indeed, if $\pi$ is the trivial character on $N$, then $A_\pi = A$ which is an abelian group. Hence the stabilizer $A$ is of type I. If $\pi$ is not the trivial character, then the stabilizer $A_\pi$ is trivial. So all stabilizers are of type I. According to \cite[Theorem~9.3]{MR0098328} we may now conclude that $AN$ is of type I.

The unitary dual of $AN$ is described in \cite[Proposition~7.6]{MR0422519}. The irreducible representations of $AN$ fall into three families: representations obtained from the trivial character of $N$, representations obtained from non-trivial characters of $N$, and representations obtained from infinite dimensional representations of $N$.

Representations obtained from the trivial character of $N$ annihilate $N$ and factor through $AN/N = A$. If $\pi = \Ind_N^{AN}\chi$ is a representation of $AN$ induced from a non-trivial character of $N$, then $\pi$ annihilates the center $Z$ of $N$ (Lemma~\ref{lem:trivial-induction}).

Finally, consider a representation $\pi = \Ind_N^{AN} \eta_\lambda$ where $\eta_\lambda\in\hat N$ is an infinite dimensional irreducible representation. As mentioned before, the action of $A$ on the representations $\{\eta_\lambda\}_\lambda$ has two orbits, $\R_+$ and $\R_-$. The Plancherel measure of these two orbits can be shown to be positive (see the last part of the proof of Proposition~\ref{prop:dichotomy}). By \cite[Corollary~11.1]{MR0342641} we conclude that $\pi$ is a subrepresentation of the left regular representation of $AN$.
\end{proof}


\end{document}